\theoremstyle{plain} 
\newtheorem{theorem}{\indent\sc Theorem}[section]
\newtheorem{lemma}[theorem]{\indent\sc Lemma}
\newtheorem{corollary}[theorem]{\indent\sc Corollary}
\newtheorem{claim}[theorem]{\indent\sc Claim}
\theoremstyle{definition} 
\newtheorem{remark}[theorem]{\indent\sc Remark}
\def\Pi{{\mathbf{P}}}
\begin{document}

\title[Spanning $k-$ended tree]{Progress on sufficient conditions for a graph to have a spanning $k-$ended tree} 

\author[P.~H.~Ha]{Pham Hoang Ha}
\keywords{spanning tree, leaf, sum degree.}

\subjclass[2010]{ 
Primary 05C05, 05C70. Secondary 05C07, 05C69.
}

\address{
Department of Mathematics, Hanoi National University of Education, 136, XuanThuy str., Hanoi, Vietnam
}
\email{ha.ph@hnue.edu.vn}
\maketitle
\begin{abstract}
In 1998, Broersma and Tuinstra [J. Graph Theory \textbf{29} (1998), 227-237] proved that if $G$ is a connected graph satisfying $\sigma_2(G) \geq |G|-k+1$ then $G$ has a spanning $k-$ended tree. They also gave an example to show that the condition "$\sigma_2(G) \geq |G|-k+1$" is sharp. In this paper, we introduce a new progress for this result.
Let $K_{m,m+k}$ be a complete bipartite graph with bipartition $V(K_{m,m+k})=A\cup B, |A|=m, |B|=m+k.$ Denote by $H$ to be the graph obtained from $K_{m,m+k}$ by adding (or no adding) some edges with two end vertices in $A.$ We prove that if $G$ is a connected graph satisfying $\sigma_2(G) \geq |G|-k$ then $G$ has a spanning $k-$ended tree except for the case $G$ is isomorphic to a graph $H.$ As a corollary of our main result, a sufficient condition for a graph to have a few branch vertices is given.	
\end{abstract}
\section{Introduction}

In this paper, we only consider finite simple graphs. Let $G$ be a
graph with vertex set $V(G)$ and edge set $E(G)$. For any vertex
$v\in V(G)$, we use $N_G(v)$ and $d_G(v)$ (or $N(v)$ and $d(v)$ if
there is no ambiguity) to denote the set of neighbors of $v$ and the
degree of $v$ in $G$, respectively. For any $X\subseteq V(G)$, we
denote by $|X|$ the cardinality of $X$. We
use $G-X$ to denote the graph obtained from $G$ by deleting the
vertices in $X$ together with their incident edges. We define $G-uv$ to be the
graph obtained from $G$ by deleting the edge $uv\in E(G)$, and
$G+uv$ to be the graph obtained from $G$ by adding an edge $uv$
between two non-adjacent vertices $u$ and $v$ of $G$. We write $A:=B$ to rename $B$ as $A$.

A subset $X\subseteq V(G)$ is called an \emph{independent set} of
$G$ if no two vertices of $X$ are adjacent in $G$.  The maximum size
of an independent set in $G$ is denoted by $\alpha(G)$. For $n\geq
1$, we define $\sigma_2(G)=\min\{ d(u)+
d(v)$ for all non-adjacent vertices $u, v \in V(G)$\}. 

Let $T$ be a tree. A vertex of degree one is a \emph{leaf} of $T$
and a vertex of degree at least three is a \emph{branch vertex} of
$T$. Setting $L(T)$ the set of leaves of the tree $T.$ A tree having at most $k$ leaves is called a $k-$ended tree. A spanning tree of a graph $G$ is a tree $T$ with $V(T)=V(G).$ In particular, a Hamiltonian path is a spanning $2-$ended tree. We refer
to~\cite{Di05} for terminology and notation not defined here.

There are several well-known conditions (such as the independence
number conditions and the degree sum conditions) ensuring that a
graph $G$ contains a spanning tree with a bounded number of leaves
or branch vertices (see the survey paper~\cite{OY} and the
references cited therein for details).

Ore~\cite{Ore} obtained a
sufficient condition related to the degree sum for
 a connected graph to have a Hamiltonian path. 
\begin{theorem}[{\cite[Ore]{Ore}}]\label{Ore}
	Let $G$ be a connected graph. If $\sigma_2(G) \geq |G|-1$, then $G$ has
	a Hamiltonian path.
\end{theorem}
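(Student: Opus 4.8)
The plan is to argue by contradiction via a longest-path argument, the standard route for Ore-type degree conditions. Write $n = |G|$, and suppose $G$ is connected with $\sigma_2(G) \ge n - 1$ but has no Hamiltonian path. Let $P = v_1 v_2 \cdots v_p$ be a longest path in $G$; since $G$ has no Hamiltonian path, $p < n$. By the maximality of $P$, every neighbour of $v_1$ and every neighbour of $v_p$ lies on $P$, so that $N(v_1) \cup N(v_p) \subseteq \{v_1, \dots, v_p\}$.

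First I would rule out two configurations. If $v_1 v_p \in E(G)$, then $v_1 v_2 \cdots v_p v_1$ is a cycle $C$ through all $p$ vertices of $P$; since $p < n$ and $G$ is connected, some vertex $u \notin V(C)$ is adjacent to a vertex $v_j \in V(C)$, and deleting an edge of $C$ incident to $v_j$ and appending $u$ produces a path on $p + 1$ vertices, contradicting the choice of $P$. Hence $v_1$ and $v_p$ are non-adjacent. Next, suppose that for some $i$ with $1 \le i \le p - 1$ we have both $v_{i+1} \in N(v_1)$ and $v_i \in N(v_p)$; since $v_1 v_p \notin E(G)$ this forces $i \ne 1$ and $i \ne p - 1$, so $v_1 v_{i+1}$ and $v_i v_p$ are genuine chords and $v_1 v_2 \cdots v_i v_p v_{p-1} \cdots v_{i+1} v_1$ is again a cycle through all $p$ vertices of $P$. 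The same connectivity argument then yields a path one vertex longer than $P$, a contradiction, so no such index $i$ exists.

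Now I would count. Put $S = \{\, i : 1 \le i \le p - 1,\ v_{i+1} \in N(v_1) \,\}$ and $T = \{\, i : 1 \le i \le p - 1,\ v_i \in N(v_p) \,\}$. Since all neighbours of $v_1$ and of $v_p$ lie on $P$ while $v_1 \notin N(v_1)$ and $v_p \notin N(v_p)$, each neighbour $v_\ell$ of $v_1$ contributes the index $\ell - 1$ to $S$ and each neighbour $v_\ell$ of $v_p$ contributes the index $\ell$ to $T$, in both cases bijectively; thus $|S| = d(v_1)$, $|T| = d(v_p)$, and $S, T \subseteq \{1, \dots, p - 1\}$. The previous step says exactly that $S \cap T = \emptyset$, so
\[
d(v_1) + d(v_p) = |S| + |T| = |S \cup T| \le p - 1 < n - 1 .
\]
Since $v_1$ and $v_p$ are non-adjacent, however, $d(v_1) + d(v_p) \ge \sigma_2(G) \ge n - 1$ --- a contradiction. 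Hence $G$ has a Hamiltonian path.

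The heart of the argument, and its only mildly delicate point, is the rotation step converting either cycle configuration, together with connectivity, into a path with one more vertex; everything else is bookkeeping. Since the hypothesis is the extremal value $\sigma_2(G) \ge n - 1$, the closing inequality chain has no slack to spare, which is exactly why the strict inequality $p - 1 < n - 1$ supplied by $p < n$ is indispensable.
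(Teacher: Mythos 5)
Your proof is correct: the longest-path rotation argument, the disjointness of the two index sets, and the final degree count against $\sigma_2(G)\geq |G|-1$ are all sound (including the careful exclusion of $i=1$ and $i=p-1$ via $v_1v_p\notin E(G)$). The paper itself only cites Ore's theorem without proof, but your argument is essentially the same machinery the paper redeploys for its main theorem --- Claim 3.1 (i)--(iii) and the inequality $\deg_G(u)+\deg_G(v)\leq |P|-1$ are exactly your steps --- so this is the standard and intended approach.
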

\noindent After that, Broersma and Tuinstra~\cite{BT} generalized above result by proving the following theorem.
\begin{theorem}\label{t2}{\rm (Broerma and Tuinstra~\cite{BT})}
	Let $G$ be a connected graph and let $k\geq 2$ be an integer. If
	$\sigma_2(G)\geq |G|-k+1$, then $G$ has a spanning $k-$ended tree.
\end{theorem}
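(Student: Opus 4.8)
The plan is to deduce Theorem~\ref{t2} from the same idea that proves Ore's Theorem~\ref{Ore}, replacing ``longest path / Hamilton rotation'' by ``spanning tree with fewest leaves / leaf rotation''. Assume, for contradiction, that the connected graph $G$ with $\sigma_2(G)\ge |G|-k+1$ has no spanning $k$-ended tree, and choose a spanning tree $T$ of $G$ minimizing $\ell:=|L(T)|$; since a spanning tree with at most $k$ leaves would already be a spanning $k$-ended tree, $\ell\ge k+1\ge 3$, so $T$ is not a path. I would then fix a longest path $P=u_1u_2\cdots u_p$ in $T$. Its endpoints $u_1,u_p$ are leaves of $T$, and maximality of $P$ gives two further facts: $N_G(u_1)\cup N_G(u_p)\subseteq V(P)$ (a neighbour of $u_1$ or $u_p$ outside $V(P)$ would extend $P$), and, because $V(T)\setminus V(P)\ne\emptyset$ while $T$ is connected, some \emph{internal} vertex of $P$ carries a nonempty subtree of $T$ hanging off $P$; call $F$ the union of all these subtrees, so $F\ne\emptyset$ and no subtree of $F$ is attached at $u_1$ or $u_p$.

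The core step is a leaf-rotation lemma. First, $u_1u_p\notin E(G)$: otherwise $P+u_1u_p$ is a cycle through exactly $V(P)$, so $(P+u_1u_p)\cup F$ is a connected spanning subgraph of $G$ with a single cycle, and deleting from it a suitable edge of that cycle incident with a subtree-bearing internal vertex of $P$ produces a spanning tree in which $u_1$ and $u_p$ are no longer leaves while at most one new leaf is created, contradicting minimality of $\ell$. With $u_1u_p\notin E(G)$ in hand, the same move shows that the ``predecessor set'' $N^{-}(u_1):=\{u_i : u_{i+1}\in N_G(u_1)\}$ and the set $N_G(u_p)$ are disjoint: if $u_i\in N^{-}(u_1)\cap N_G(u_p)$ then $u_1u_{i+1},u_iu_p\in E(G)$ and $G[V(P)]$ contains the Hamilton cycle $u_1u_2\cdots u_iu_pu_{p-1}\cdots u_{i+1}u_1$, to which the same deletion argument applies. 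Since $N^{-}(u_1)$ and $N_G(u_p)$ are then disjoint subsets of $\{u_1,\dots,u_{p-1}\}$, we get $d_G(u_1)+d_G(u_p)=|N^{-}(u_1)|+|N_G(u_p)|\le p-1$.

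To finish, I would count the vertices off $P$. Any leaf of $T$ that lies on $P$ has $T$-degree $1$, hence is an endpoint of $P$; thus the $\ell-2$ leaves of $T$ different from $u_1$ and $u_p$ all lie in $V(T)\setminus V(P)$, so $p\le |G|-(\ell-2)\le |G|-k+1$. Combining with the previous inequality, $d_G(u_1)+d_G(u_p)\le p-1\le |G|-k$, which contradicts $\sigma_2(G)\ge |G|-k+1$ because $u_1$ and $u_p$ are non-adjacent. This contradiction proves Theorem~\ref{t2}; for $k=2$ it is literally the proof of Theorem~\ref{Ore}.

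The step I expect to be the real obstacle is the deletion argument inside the leaf-rotation lemma: one must check that, whatever the cyclic positions of $u_1$, $u_p$ and a subtree-bearing internal vertex $u_j$ happen to be, some edge of the relevant cycle can be removed so that $u_1$ and $u_p$ both become internal while at most one vertex turns into a new leaf — this requires a small but genuine case analysis on which of the two cycle-edges at $u_j$ to delete. Everything else (localizing the neighbourhoods to $V(P)$, producing the subtree-bearing internal vertex, and the final count of leaves off $P$) is routine, and the degree-sum hypothesis is used only at the very last line.
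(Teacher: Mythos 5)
Your overall architecture (minimum-leaf spanning tree, leaf rotation to get $u_1u_p\notin E(G)$ and $N^-(u_1)\cap N_G(u_p)=\emptyset$, then the degree count) is coherent, and the rotation steps themselves are fine: in both rotations two old leaves $u_1,u_p$ stop being leaves and at most one new leaf is created, so $|L(T)|$ drops, contradicting minimality. The genuine gap is the claim $N_G(u_1)\cup N_G(u_p)\subseteq V(P)$, which your whole final inequality $d_G(u_1)+d_G(u_p)=|N^-(u_1)|+|N_G(u_p)|\le p-1$ depends on. Your stated justification --- ``a neighbour of $u_1$ outside $V(P)$ would extend $P$'' --- is invalid, because $P$ is a longest path \emph{in $T$}, and an edge of $G\setminus T$ from $u_1$ to a vertex $w\notin V(P)$ does not extend any path of $T$. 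Nor does leaf-minimality rescue it: the natural exchange $T':=T+u_1w-u_js$ (where $u_j$ is the vertex of $P$ at which the $T$-subtree containing $w$ hangs and $s$ its neighbour toward $w$) kills the leaf $u_1$ but may create the new leaf $s$, so $|L(T')|=|L(T)|$ is possible when $w$ is not a leaf of $T$, and no contradiction with minimality arises. So as written the proof does not go through at exactly the step that feeds the degree-sum computation.

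The gap is repairable, but it needs an extra idea: e.g.\ choose $T$ lexicographically (first minimize $|L(T)|$, then maximize the length of a longest path of $T$); in the exchange above $T'$ has at most $|L(T)|$ leaves and contains the path $wu_1u_2\cdots u_p$ on $p+1$ vertices, contradicting the secondary choice, which restores $N_G(u_1)\cup N_G(u_p)\subseteq V(P)$. Alternatively --- and this is essentially the paper's route --- work with a longest path $P$ of $G$ from the start: there the containment is automatic, one gets $\deg_G(u)+\deg_G(v)\le |P|-1$ for the (nonadjacent) endpoints, hence $|G|\le |P|+k-2$ under $\sigma_2(G)\ge|G|-k+1$, and any spanning tree obtained by extending $P$ has at most $2+(|G|-|P|)\le k$ leaves. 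Note also that the paper itself does not reprove Theorem~\ref{t2} directly; it obtains it as a corollary of Theorem~\ref{thm-main} (whose proof starts from a longest path of $G$ and builds a spanning caterpillar) together with Lemma~\ref{lemma}, so your tree-based rotation scheme is a genuinely different route, but it is incomplete until the containment step is justified.
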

\noindent They also gave an example to show that the condition "$\sigma_2(G)\geq |G|-k+1$" is sharp. For the aim of this paper, we recall such example: For two positive integers $k, m (k\geq 2),$ let $K_{m,m+k}$ be a complete bipartite graph with bipartition $V(K_{m,m+k})=A\cup B, |A|=m, |B|=m+k.$ Set $G:=K_{m,m+k}.$ Then $\sigma_2(G)=|G|-k$ but $G$ has no spanning $k-$ended tree.

A natural question is whether we can find all graphs $G$ so that $\sigma_2(G)\geq |G|-k$ but $G$ has no spanning $k-$ended tree. In this paper, we will give an answer for this question. As its applications, we give an improvement of Theorem \ref{t2}.
\section{Main results}
To state the main results, we first define $H$ to be the graph obtained from $K_{m,m+k}$ by adding (or no adding) some edges with two end vertices in $A$ (see Figure 1).
 
  \begin{figure}[h]
 	\centering
 	\includegraphics[width=0.5\linewidth]{./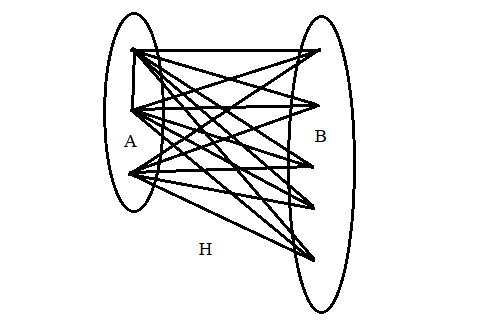}
 	\caption[Graph H]{Graph H with m=3, k=2}
 	\label{Pic1}
 \end{figure}
 
  We have the following lemma.
\begin{lemma} \label{lemma}
	$\sigma_2(H)= |H|-k$ and $H$ has no spanning $k-$ ended tree.
\end{lemma}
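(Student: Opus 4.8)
The plan is to verify the two assertions of Lemma~\ref{lemma} directly from the structure of $H$, which differs from $K_{m,m+k}$ only by possibly adding edges inside the part $A$. Write $V(H)=A\cup B$ with $|A|=m$, $|B|=m+k$, so $|H|=2m+k$, and recall every vertex of $A$ is adjacent to every vertex of $B$.

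First I would compute $\sigma_2(H)$. The set $B$ is independent in $H$ (no edges are added inside $B$), and for $u,v\in B$ each vertex has degree exactly $m$, so $d_H(u)+d_H(v)=2m=|H|-k$; hence $\sigma_2(H)\le |H|-k$. For the reverse inequality, take any non-adjacent pair $x,y\in V(H)$. Since $A\cup\{b\}$ induces a complete graph for every $b\in B$ (each $a\in A$ sees all of $B$, in particular $b$), any non-adjacent pair must lie in $B$ or consist of one vertex of $A$ and one of $B$ — but the latter is impossible since $A$ is completely joined to $B$. So $x,y\in B$, giving $d_H(x)+d_H(y)=2m=|H|-k$. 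Therefore $\sigma_2(H)=|H|-k$, which is the first claim. (One should note $|H|-k=2m\ge 2$, so the definition of $\sigma_2$ is not vacuous; if $H$ happened to be complete, which occurs only when $m=1$ and all edges inside $A$ are present — but $|A|=1$ then — one handles that degenerate case separately, though with $k\ge 2$ and $|B|=m+k\ge 3$ the graph is never complete.)

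Next I would show $H$ has no spanning $k$-ended tree. Suppose for contradiction $T$ is a spanning tree of $H$ with $|L(T)|\le k$. The key point is that every edge of $T$ has at least one endpoint in $A$, because all edges of $H$ not inside $A$ join $A$ to $B$, and edges inside $B$ do not exist. Consider the vertices of $B$: in $T$, the edges incident to a vertex $b\in B$ all go to $A$. Now count via leaves. The standard identity for any tree is $\sum_{v\in V(T)}(d_T(v)-2)=-2$, i.e. $\sum_{v}(2-d_T(v))=2$, and the leaves are exactly the vertices contributing positively. Restricting attention to $B$: every $b\in B$ with $d_T(b)=1$ is a leaf, and the non-leaf vertices of $B$ have $d_T(b)\ge 2$. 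Let $\ell_B$ be the number of leaves lying in $B$. Then the total $T$-degree of the vertices of $B$ is at least $\ell_B\cdot 1+(|B|-\ell_B)\cdot 2 = 2|B|-\ell_B$. On the other hand, each edge of $T$ incident to $B$ has its other endpoint in $A$ (no $B$–$B$ edges), so this total degree equals the number of $T$-edges between $A$ and $B$, which is at most the number of edges of $T$ with an endpoint in $A$, i.e. at most $|E(T)| = |H|-1 = 2m+k-1$, but more sharply it is at most the sum of $T$-degrees over $A$ minus (twice the number of $T$-edges inside $A$) $\le \sum_{a\in A} d_T(a)$.

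To finish cleanly I would instead bound as follows: the number of $T$-edges between $A$ and $B$ is at most $\sum_{a\in A} d_T(a)$. Also $\sum_{a\in A}d_T(a) = 2\,e_T(A) + e_T(A,B)$ where $e_T(A)$ is the number of tree edges inside $A$ and $e_T(A,B)$ the number between $A$ and $B$; since $T$ is a tree, $e_T(A)+e_T(A,B) = |E(T)| = 2m+k-1$ (all edges are inside $A$ or between $A$ and $B$). Combining $e_T(A,B) = 2|B| - (\text{something} \ge 0) $ is the wrong direction, so I pivot to the leaf count argument: the number of non-leaf vertices of $T$ that lie in $B$ is $|B|-\ell_B$, each contributing degree $\ge 2$ to $e_T(A,B)$, and each leaf in $B$ contributing $1$, so $e_T(A,B)\ge 2|B|-\ell_B = 2(m+k)-\ell_B$. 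Meanwhile $e_T(A,B)\le |E(T)| - e_T(A) \le 2m+k-1$ when $e_T(A)\ge 0$; but in fact we want the cleanest route: since $T$ is connected and spanning, contract $A$ to a point — more simply, count leaves among $A$ and $B$ together, $\ell_A+\ell_B=|L(T)|\le k$, and note $\sum_{v}(d_T(v)-1)=|E(T)|-1$... I will settle on: $e_T(A,B)\le$ (sum of $d_T$ over $A$) and (sum of $d_T$ over $A$) $= 2|E(T)| - (\text{sum of }d_T\text{ over }B) \le 2(2m+k-1) - (2|B|-\ell_B) = 2m-2+\ell_B \le 2m-2+k$. Then $2(m+k)-\ell_B \le e_T(A,B) \le 2m-2+\ell_B$ forces $\ell_B \ge k+1$, contradicting $|L(T)|\le k$. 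The main obstacle is choosing the inequality chain that avoids circularity, since several natural bounds on $e_T(A,B)$ point the ``wrong way''; once the leaf-counting double-bound above is set up correctly, the contradiction $k+1\le \ell_B\le |L(T)|\le k$ is immediate, and the lemma follows.
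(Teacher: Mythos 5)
Your argument for the second assertion is correct and is essentially the paper's own counting: since $B$ is independent, every tree edge meets $B$ at most once, so $\sum_{b\in B}d_T(b)\le |E(T)|=2m+k-1$, while $\sum_{b\in B}d_T(b)\ge 2|B|-\ell_B$; your detour through $\sum_{a\in A}d_T(a)=2|E(T)|-\sum_{b\in B}d_T(b)$ is just a longer route to the same inequality $\sum_{b\in B}d_T(b)\le|E(T)|$, and the contradiction $\ell_B\ge k+1>k$ follows either way. (The exposition of that part, with its ``wrong direction'' pivots, should be cleaned up, but the final chain is sound.)

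There is, however, a genuine error in your computation of $\sigma_2(H)$. You assert that $A\cup\{b\}$ induces a complete graph for every $b\in B$ and conclude that every non-adjacent pair lies in $B$. This is false: $H$ is obtained from $K_{m,m+k}$ by adding \emph{some or no} edges inside $A$, so $A$ need not be a clique; indeed for $H=K_{m,m+k}$ with $m\ge 2$ every pair inside $A$ is non-adjacent, and your justification (``each $a\in A$ sees all of $B$'') only shows $b$ is joined to $A$, not that $A$ is complete. Thus the step ``so $x,y\in B$'' fails, and with it your proof of the lower bound $\sigma_2(H)\ge |H|-k$ as written. The gap is easy to close: a non-adjacent pair inside $A$ has degree sum at least $2(m+k)=|H|+k>|H|-k$, since every vertex of $A$ is adjacent to all $m+k$ vertices of $B$; hence the minimum over all non-adjacent pairs is still attained inside $B$ and equals $2m=|H|-k$, which is exactly how the paper argues (it records $\deg_H(x)\ge m+k$ for $x\in A$ and $\deg_H(y)=m$ for $y\in B$ and takes the minimum over the genuinely non-adjacent pairs). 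Please replace the false structural claim by this degree comparison.
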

\begin{proof}
	By the definition of the graph $H,$ we have the fact that for each two vertices $x\in A, y \in B$ then $\deg_H(x)\geq m+k$ and $ \deg_H(y)=m.$ Moreover, for each two distinct vertices $x, y \in B$ then $xy\not\in E(H).$ Hence we can compute $\sigma_2(H)=2m= |H|-k.$
	 
	 Now, assume that $T$ is a spanning tree of $H.$ Then, we have $E(T)= V(T)- 1= 2m+k-1.$
On the other hand, we have 
\begin{align*}
	E(T)& \geq |L(T)\cap B| + 2|B-(L(T)\cap B)|\\
	&=2|B|-|L(T)\cap B|=2(m+k)-|L(T)\cap B|.	
	\end{align*}
Hence, we obtain 
$$
	2m+k-1 \geq 2(m+k)-|L(T)\cap B|\Rightarrow |L(T)\cap B| \geq k+1 \Rightarrow |L(T)| \geq k+1.	
$$
Therefore, $T$ is not a spanning $k-$ended tree.\\
 The lemma is proved.
\end{proof}
The main purpose of this paper is to prove the following theorem.
\begin{theorem}\label{thm-main}
Let $G$ be a connected graph and let $k\geq 2$ be an integer. If $\sigma_2(G) \geq |G|-k$ then $G$ has a spanning $k-$ended tree except for the case $G$ is isomorphic to a graph $H.$	
		\end{theorem}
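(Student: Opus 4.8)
The plan is to proceed by contradiction: assume $G$ is a connected graph with $\sigma_2(G) \geq |G|-k$ that has no spanning $k$-ended tree, and show that $G$ must be isomorphic to some graph $H$ of the described type. The natural starting point is to take a spanning tree $T$ of $G$ with the minimum number of leaves, say $\ell = |L(T)| \geq k+1$, and among all such trees choose $T$ so as to optimize a secondary parameter (for instance, maximize the number of edges of $G$ lying in $T$, or maximize the sum of degrees of leaves, or minimize the total distance between leaves). The first block of the argument is the standard leaf-exchange toolkit: if $u$ is a leaf of $T$ with neighbor (in $T$) a vertex $v$ on the path from $u$, and $u$ is adjacent in $G$ to some vertex $w$, then rerouting produces another spanning tree, and minimality of $\ell$ forces structural restrictions — typically that no leaf of $T$ is adjacent in $G$ to an internal vertex of $T$ other than along very restricted positions, and that two leaves $u_1, u_2$ cannot be adjacent in $G$ (else we could merge two leaves into a path, reducing $\ell$). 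This yields a large independent-like set among the leaves and their tree-neighbors.

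The second block is the degree-sum counting that converts $\sigma_2(G) \geq |G|-k$ into a rigid equality. Pick two leaves $u_1, u_2$ of $T$; since they are non-adjacent, $d(u_1) + d(u_2) \geq |G|-k$. The leaf-exchange restrictions bound the neighborhoods $N(u_1), N(u_2)$ inside $V(T) = V(G)$: each neighbor of a leaf $u_i$ must be an internal vertex occupying a position that does not allow a leaf-count-decreasing reroute, and one shows $N(u_1)$ and $N(u_2)$ together miss all $\ell$ leaves and miss the tree-neighbors of $u_1$ and $u_2$ — giving $d(u_1) + d(u_2) \leq |G| - \ell - (\text{correction})$. Combined with $\ell \geq k+1$ this should squeeze everything: $\ell = k+1$ exactly, every inequality used is tight, $T$ is a spider (or a very restricted caterpillar) with $k+1$ legs, and the vertices split into the two classes that will become $A$ and $B$. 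The branch structure of $T$ being forced essentially identifies the bipartition-like partition $V(G) = A \cup B$ with $|B| = |A| + k$.

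The third block is upgrading the tight tree structure to the full claim $G \cong H$, i.e. showing that $B$ is independent in $G$, that every vertex of $A$ is adjacent to every vertex of $B$, and that the only optional edges are inside $A$. Independence of $B$: if two vertices $b, b' \in B$ were adjacent, one uses that edge together with the spider $T$ to build a spanning tree with at most $k$ leaves, a contradiction — this is essentially the same counting as in the proof of the Lemma (the edge count $|V(T)|-1 = 2m+k-1$ versus $2|B| - |L(T)\cap B|$ inequality), run in reverse. Completeness of the $A$–$B$ join: if some $a \in A$ missed some $b \in B$, then $a$ and $b$ are non-adjacent, so $d(a) + d(b) \geq |G|-k = 2m$; but $d(b) \leq |A| = m$ (since $B$ is independent) forces $d(a) \geq m$, and pushing this for all such pairs, together with a reroute argument, contradicts either the leaf-minimality or the connectivity unless the join is complete. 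Finally edges inside $A$ are unconstrained because adding them to $K_{m,m+k}$ changes neither $\sigma_2$ (it stays $2m$, as $B$-vertices still have degree $m$ and are pairwise non-adjacent) nor the obstruction to a $k$-ended tree (the Lemma's counting only used $|B|$, $|L(T)\cap B|$, and independence of $B$).

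I expect the main obstacle to be the second block: carefully controlling the neighborhoods of the leaves so that the degree-sum bound $\sigma_2 \geq |G|-k$ is forced to be met with equality, and ensuring this rules out every spanning tree with more leaves rather than only spiders. In particular, when $T$ is not a spider — a caterpillar or something with internal branch vertices — one must show that either a reroute strictly decreases the leaf count, or the degree-sum of two suitably chosen non-adjacent vertices drops below $|G|-k$. Handling the boundary interaction between "leaf $u_i$ adjacent to a vertex near another leaf" cases (which could allow an exchange) and keeping the bookkeeping of which vertices are excluded from $N(u_1) \cup N(u_2)$ exact (no double counting, no off-by-one) is where the real work lies; the secondary optimization parameter on $T$ has to be chosen precisely so that these exchange moves are all blocked, pinning down the structure.
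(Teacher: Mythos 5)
Your proposal is a strategy outline, and the two places you yourself flag as ``the real work'' are exactly where the proof of this theorem lives, so as written there is a genuine gap rather than a proof. Note first that the paper does not use a minimum-leaf spanning tree at all: it takes a longest path $P$ with ends $u,v$, and all of the counting rests on the path structure --- the predecessor map $x\mapsto x^-$ along $P$ gives $N_G(u)^-\cap N_G(v)=\emptyset$, hence $\deg_G(u)+\deg_G(v)\leq |P|-1$, which together with $\sigma_2(G)\geq |G|-k$ and the absence of a spanning $k$-ended tree forces $|G|=|P|+k-1$ and a spanning caterpillar with spine $P$ whose off-path vertices are leaves. In your setting of a leaf-minimal spanning tree $T$ there is no analogue of this predecessor injection: a neighbor of a leaf $u_1$ may lie anywhere in $T$, off the $u_1$--$u_2$ path, and ``the exchange is blocked'' does not by itself convert into the disjointness of shifted neighborhoods needed to prove a bound of the form $d(u_1)+d(u_2)\leq |G|-\ell-(\text{correction})$. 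You give no mechanism for obtaining such an upper bound, and without it the hypothesis $\sigma_2(G)\geq |G|-k$ never meets any upper bound, so no tightness (and hence no structure) is extracted. This is not routine bookkeeping to be filled in later; it is the central difficulty, and it is precisely why the paper (and the Broersma--Tuinstra tradition it follows) works with a longest path rather than an arbitrary optimal tree.

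The third block has a further circularity. You bound $d(b)\leq |A|=m$ ``since $B$ is independent,'' but that requires $N_G(b)\subseteq A$, which does not follow from independence of $B$ or from the tree structure --- it is itself the main structural claim. In the paper this is the content of Claims 3 and 6, each established by an explicit rerouting that uses the path order (for instance $P':=P+x_iy+uy^{+}-yy^{+}$, and the cycle construction in Claim 5), combined with the equality cases of the degree-sum count; similarly, your step ``pushing this for all such pairs, together with a reroute argument, contradicts leaf-minimality or connectivity unless the join is complete'' names no construction at all. So the proposal sketches a conceivable alternative route (leaf exchanges on a minimum-leaf tree), but it leaves unproved exactly the neighborhood-location and equality statements that constitute the theorem. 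The most direct repair is to abandon the optimal-tree framework and argue from a longest path, first pinning down the caterpillar structure and the sets $A=N_G(v)$ and $B=\{x_1,\dots,x_k\}\cup N_G(u)^-$, as the paper does.
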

\begin{remark}
	We can see that Theorem \ref{t2} is a corollary of Theorem \ref{thm-main} and Lemma \ref{lemma}.
\end{remark}
Moreover, it is easy to see that if a tree has at most $k$ leaves ($k\geq 2$),
then it has at most $k-2$ branch vertices. Therefore, we immediately
obtain the following corollary from Theorem~\ref{thm-main} and the fact that $H$ has a spanning tree with one branch vertex.
\begin{corollary}\label{coro1}
Let $G$ be a connected graph and let $k\geq 3$ be an integer. If $\sigma_2(G) \geq |G|-k$ then $G$ has a spanning with at most $k-2$ branch vertices.
\end{corollary}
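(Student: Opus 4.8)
The plan is to obtain the statement as a direct consequence of Theorem~\ref{thm-main}, so that the entire argument reduces to the elementary fact, already mentioned before the statement, that any tree with at most $k$ leaves has at most $k-2$ branch vertices, together with a check of the single exceptional family $H$.

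First I would record the leaf--branch inequality by a degree count. Let $T$ be a tree on $n$ vertices, let $n_1$ denote its number of leaves, $n_2$ its number of vertices of degree $2$, and let $v_1,\dots,v_b$ be its branch vertices, so that $d_T(v_i)\geq 3$ for every $i$ and $n=n_1+n_2+b$. Since $\sum_{v\in V(T)}d_T(v)=2\bigl(n-1\bigr)$, we obtain
\[
n_1+2n_2+\sum_{i=1}^{b}d_T(v_i)=2\bigl(n_1+n_2+b\bigr)-2,
\]
and therefore $\sum_{i=1}^{b}d_T(v_i)=n_1+2b-2$. Using $d_T(v_i)\geq 3$ this yields $3b\leq n_1+2b-2$, i.e.\ $b\leq n_1-2$. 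Hence a tree with at most $k$ leaves has at most $k-2$ branch vertices.

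With this in hand I would apply Theorem~\ref{thm-main} to $G$ and split into two cases. If $G$ is not isomorphic to any graph $H$, then $G$ has a spanning $k$-ended tree $T$; by definition $|L(T)|\leq k$, so the inequality above shows that $T$ has at most $k-2$ branch vertices, which is exactly the desired spanning tree. If instead $G\cong H$, I would use that $H$ admits a spanning tree with a single branch vertex (for instance, join one vertex $a_0\in A$ to every vertex of $B$ and attach each remaining vertex of $A$ to a distinct vertex of $B$, so that $a_0$, of degree $m+k\geq 3$, is the only branch vertex). Since $k\geq 3$ gives $k-2\geq 1$, this spanning tree has at most $k-2$ branch vertices, completing this case as well.

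There is no genuine obstacle beyond this bookkeeping; the corollary is a formal deduction from Theorem~\ref{thm-main}. The only role of the hypothesis $k\geq 3$ (rather than the $k\geq 2$ of Theorem~\ref{t2}) appears in the exceptional case: for $k=2$ the conclusion would force a spanning path of $H$, and a counting argument on the independent set $B$ shows that $H$ admits no Hamiltonian path when $k\geq 2$, since the $m+k$ pairwise non-consecutive vertices of $B$ would require at least $m+k-1>m$ separating vertices drawn from $A$. Thus it is precisely the passage from $k-2=0$ to $k-2\geq 1$ that renders the exceptional graphs harmless.
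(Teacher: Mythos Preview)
Your proposal is correct and follows exactly the approach sketched in the paper just before the corollary: apply Theorem~\ref{thm-main}, use the leaf--branch inequality for the non-exceptional case, and note that $H$ admits a spanning tree with a single branch vertex to handle the exceptional case. You simply flesh out the details (the degree-count proof of the inequality, an explicit construction of the one-branch spanning tree of $H$, and the remark on why $k\geq 3$ is needed), all of which are fine.
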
	
\section{Proofs of Theorem \ref{thm-main}}
In this section, we always denote by $P[u,v]$ a path connecting two vertices $u$ and $v$, which are the end vertices of $P$. Let $P$ be a longest path in $G$ and let $u,v$ be the end-vertices of $P$. We assign an orientation in $P[u, v]$ from $u$ to $v$, and for a vertex $x$ of $P$, we denote its successor and predecessor, if any, by $x^+$ and $x^-$, respectively. For each vertex $z \in G,$ we denote $d_G(z,P)=\min\{ d_G(z,x): x \in P\}.$ A tree is called a caterpillar if all its leaves are adjacent to the
same its path, and the path is called a spine of the caterpillar. 

Suppose that $G$ satisfies $\sigma_2(G) \geq |G|-k$ but $G$ contains no
spanning $k-$ended tree. We will prove that $G$ is isomorphic to a graph $H.$

By the assumption, $G$ has no a Hamiltonian path. Then, $P$ is not a Hamiltonian path of $G.$ Hence, the following claim holds immediately by the fact that $P$ is a longest path of $G$.
\begin{claim}\label{claim1}
	\begin{enumerate}
		\item[{\rm (i)}] $N_G(u)\cup N_G(v) \subseteq V(P)$.
		\item[{\rm (ii)}] $G$ has no cycle $C$ with $V(C) =V(P)$.
		\item[{\rm (iii)}] $N_G(u)^- \cap N_G(v)=\emptyset$ and $\{v\} \cup N_G(u)^- \cup N_G(v) \subseteq V(P)$.
	\end{enumerate}	
\end{claim}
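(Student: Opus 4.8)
The plan is to obtain all three items as routine consequences of the maximality of $P$, using the fact already noted that $P$ is not a Hamiltonian path, so that $V(G)\setminus V(P)\neq\emptyset$.

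First I would prove (i) by contradiction. If $u$ had a neighbour $w\notin V(P)$, then $w,u,\dots,v$ would be a path strictly longer than $P$, which is impossible; the symmetric argument applied at $v$ finishes (i), giving $N_G(u)\cup N_G(v)\subseteq V(P)$. For (ii), suppose there were a cycle $C$ with $V(C)=V(P)$. Since $G$ is connected and $V(P)\subsetneq V(G)$, some vertex $z\notin V(P)$ is adjacent to a vertex $x\in V(C)$; deleting one of the two edges of $C$ incident with $x$ and adding the edge $zx$ produces a path on $|V(P)|+1$ vertices, again contradicting the maximality of $P$. Note that (ii) in particular forces $uv\notin E(G)$, since otherwise $P+uv$ would be such a cycle.

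For (iii), the inclusion $\{v\}\cup N_G(u)^-\cup N_G(v)\subseteq V(P)$ is immediate from (i): $v\in V(P)$, $N_G(v)\subseteq V(P)$, and each $x\in N_G(u)$ lies on $P$ and differs from $u$ (no loops), so its predecessor $x^-$ is defined and lies on $P$. For the disjointness, assume some $x\in N_G(u)$ satisfies $x^-\in N_G(v)$. Writing $P=p_1p_2\cdots p_n$ with $p_1=u$, $p_n=v$ and $x=p_i$ (so $i\geq 2$), the edges $up_i=p_1p_i$ and $p_{i-1}v=p_{i-1}p_n$ together with the two subpaths $p_i p_{i+1}\cdots p_n$ and $p_1 p_2\cdots p_{i-1}$ close up into a cycle whose vertex set is exactly $\{p_1,\dots,p_n\}=V(P)$, contradicting (ii). Hence $N_G(u)^-\cap N_G(v)=\emptyset$.

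I expect no serious obstacle: this is the standard rotation/extension toolkit for longest paths. The only points requiring a little care are the degenerate indices, namely checking that $x^-$ always exists, and observing that the sub-case $x=p_2$ (so $x^-=u$) simply says $uv\in E(G)$, which is already excluded by (ii); similarly $x=p_n$ reduces to the same situation. Once one passes to general position $2<i<n$, the cycle produced in (iii) genuinely uses all edges distinct and the contradiction with (ii) is clean.
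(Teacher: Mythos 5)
Your proof is correct and is exactly the standard rotation/extension argument that the paper leaves implicit when it says Claim 3.1 "holds immediately by the fact that $P$ is a longest path": (i) by extending $P$ past an endpoint, (ii) by connectivity plus extending a cycle on $V(P)$ to a longer path, and (iii) by the usual crossing-edge cycle, with the degenerate indices ($x^-=u$ or $x=v$, i.e.\ $uv\in E(G)$) correctly reduced to (ii). No gaps; this matches the intended justification.
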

By Claim \ref{claim1} (ii) we obtain $uv \not\in E(G).$ By Claim \ref{claim1} (iii), we have 
$$
\deg_G(u) +\deg_G(v) = |N_G(u)^-| +|N_G(v)| \leq |P|-1. 
$$
Then 
$$
|G|-k\leq \sigma_2{(G)} \leq \deg_G(u) +\deg_G(v) \leq |P|-1 \Rightarrow |G|-  |P|\leq k-1. 
$$
On the other hand, $G$ has no spanning $k-$ended tree. Hence, we obtain that 
\begin{equation} \label{eq1}
|G|=|P|+k-1,
\end{equation}
and for every vertex $w\in V(G)-V(P)$ then $d_G(w, P)=1$ and $w$ must be a leaf of $T.$\\
Moreover, $G$ also has a spanning caterpillar $T$ with spine $P$ and 
\begin{equation} \label{eq2}
\{v\} \cup N_G(u)^- \cup N_G(v) = V(P).
\end{equation}

Set $V(G)-V(P)=\{x_1,...,x_{k-1}\}$ and $x_k=v.$ Hence, $\{u,x_1,...,x_{k}\}$ is the
set of leaves of $T.$
\begin{claim}\label{claim2}
	$\{u,x_1,...,x_{k}\}$ is an independent set in $G$.
\end{claim}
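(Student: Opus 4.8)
The plan is to verify non-adjacency pair by pair inside $\{u,x_1,\dots,x_k\}$, where $x_k=v$. The pairs meeting $\{u,v\}$ are immediate from Claim~\ref{claim1}. Indeed, Claim~\ref{claim1}(ii) rules out a cycle on $V(P)$, so $uv\notin E(G)$; and for $1\le i\le k-1$ we have $x_i\in V(G)-V(P)$ while Claim~\ref{claim1}(i) forces $N_G(u)\cup N_G(v)\subseteq V(P)$, so $x_i$ is adjacent to neither $u$ nor $v$. Thus the whole issue is to show $x_ix_j\notin E(G)$ for $1\le i<j\le k-1$.

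For that I would argue by contradiction, using the spanning caterpillar $T$ with spine $P$ (leaf set $\{u,x_1,\dots,x_k\}$) to reroute away a leaf. Suppose $x_ix_j\in E(G)$. Since $T$ is a caterpillar with spine $P$ and $x_j\notin V(P)$, the vertex $x_j$ is a leaf of $T$ whose unique incident edge $e$ joins $x_j$ to a vertex of $P$; in particular $e\ne x_ix_j$ because $x_i\notin V(P)$. Set $T':=T-e+x_ix_j$. Deleting $e$ from $T$ isolates exactly the vertex $x_j$, and the edge $x_ix_j$ reconnects it, so $T'$ is again a spanning tree of $G$. Comparing degrees: $x_i$ goes from degree $1$ in $T$ to degree $2$ in $T'$, $x_j$ stays a leaf, and no other vertex changes degree; hence $L(T')=L(T)\setminus\{x_i\}$ has exactly $k$ elements. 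So $T'$ is a spanning $k-$ended tree, contradicting the standing assumption. Therefore $x_ix_j\notin E(G)$, and $\{u,x_1,\dots,x_k\}$ is independent.

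The only substantive step is this last rerouting argument; everything involving $u$ or $v$ is bookkeeping with Claim~\ref{claim1}. The point that needs care is precisely the verification that $x_ix_j$ lies outside $T$ and that removing $x_j$'s pendant edge leaves a tree plus a single isolated vertex — both of which hold because $x_j$ is a leaf of the caterpillar attached to the spine $P$, which does not contain $x_i$.
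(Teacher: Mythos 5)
Your proof is correct and follows essentially the same route as the paper, which disposes of the pairs involving $u$ and $v$ via Claim \ref{claim1} and then performs the identical exchange $T':=T+x_ix_j-zx_j$, where $z$ is the spine vertex adjacent to $x_j$ in $T$. One minor slip: the vertex $z$ does change degree (it drops by one), but since $x_j\notin N_G(u)\cup N_G(v)$ forces $z\ne u,v$, the vertex $z$ is interior to the spine $P$ and has degree at least $3$ in $T$, so it remains a non-leaf and your conclusion $L(T')=L(T)\setminus\{x_i\}$ stands.
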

\begin{proof}
	By Claim \ref{claim1} (i), we see that $ux_i\not\in E(G)$ and $x_kx_i \not\in E(G)$ for all $1\leq i \leq k-1.$ Now assume that $x_ix_j \in E(G)$ for some $1\leq i < j \leq k-1.$ We call $z\in P$ the vertex adjacent to $x_j$ in $T.$ Then the tree $T':=T+x_ix_j - zx_j$ is a spanning $k-$ended tree of $G.$ This gives a contradiction with the assumption. 
	\end{proof}
\begin{claim}\label{claim3}
For each $i\in \{1,...,k-1\}$, then $N_G(x_i)=N_G(v)(=N_G(x_k)).$
\end{claim}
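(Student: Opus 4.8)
The plan is to show that each $x_i$ (for $i\in\{1,\dots,k-1\}$) has exactly the same neighborhood as $v=x_k$, namely $N_G(v)$. First I would note that by \eqref{eq1} we have $|G|-k = |P|-1$, so the degree-sum bound is tight: for any two non-adjacent vertices $a,b$ we know $d_G(a)+d_G(b)\ge |P|-1$. Since $\{u,x_1,\dots,x_k\}$ is independent by Claim \ref{claim2}, I can pair $x_i$ with $u$ and with $v$ to extract sharp degree information. In particular, combining $d_G(u)+d_G(x_i)\ge |P|-1$ with the bound $d_G(u)+d_G(v)\le |P|-1$ coming from Claim \ref{claim1}(iii) and \eqref{eq2}, I get $d_G(x_i)\ge d_G(v)$; symmetrically, swapping the roles of $v$ and $x_i$ (both are non-adjacent to $u$, and both are leaves of $T$), I get $d_G(v)\ge d_G(x_i)$. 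Hence $d_G(x_i)=d_G(v)$ for all $i$, and likewise all these vertices have the same degree.

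Next, knowing the degrees are equal, I would upgrade this to equality of the neighborhoods themselves. The key observation is that $x_i$ is a vertex outside $P$ with $d_G(x_i,P)=1$, so $N_G(x_i)\subseteq V(P)$; write $z_i\in V(P)$ for the unique vertex of $P$ adjacent to $x_i$ in the caterpillar $T$. I claim $N_G(x_i)\subseteq N_G(v)$: if $x_i$ were adjacent to some $w\in V(P)\setminus N_G(v)$, then since $\{v\}\cup N_G(u)^-\cup N_G(v)=V(P)$ by \eqref{eq2}, either $w=v$ — impossible since $vx_i\notin E(G)$ by Claim \ref{claim2} — or $w\in N_G(u)^-$, say $w=a^-$ with $a\in N_G(u)$. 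Then I can reroute: the path from $u$ along $P$ to $a^-$, then the edge $a^-x_i$, gives one branch; meanwhile $a^- u$-segment reversed plus $a^+\dots v$ plus the edge $ua$ forms a structure from which one builds a spanning tree with leaves $\{x_1,\dots,\widehat{x_i},\dots,x_k\}\cup\{\text{fewer than }k\text{ things}\}$. More cleanly: the standard Ore-type rotation using the edge $ua$ and $a^-x_i$ produces a longer path or a spanning $k$-ended tree, contradicting maximality of $P$ or the standing hypothesis. So $N_G(x_i)\subseteq N_G(v)$, and since $|N_G(x_i)|=d_G(x_i)=d_G(v)=|N_G(v)|$, equality holds: $N_G(x_i)=N_G(v)$.

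The main obstacle I anticipate is the rerouting argument in the second paragraph: one must carefully exhibit, whenever $x_i$ has a neighbor $a^-\in N_G(u)^-$ on $P$, an explicit spanning $k$-ended tree of $G$ (or a longer path), and verify its leaf count is at most $k$. The bookkeeping is delicate because one is simultaneously juggling the $k-1$ pendant vertices $x_1,\dots,x_{k-1}$, the endpoint $v$, and the endpoint $u$, all of which are forced leaves; the reroute must not create a $(k+1)$-st leaf. I would handle this by taking the Hamiltonian-path-style rotation of $P$ through the edge $ua$ (which exists since $a\in N_G(u)$), obtaining a new longest path $P'$ with endpoint $a^-$; then attaching $x_i$ at $a^-$ extends $P'$, contradicting that $P$ (equivalently $P'$) is longest — unless $a^-$ already has $x_i$ hanging in a way blocked by earlier claims, in which case Claim \ref{claim1}(i) applied to the new endpoint delivers the contradiction. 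Everything else — the degree equalities and the final counting $|N_G(x_i)|=|N_G(v)|$ — is routine once \eqref{eq1}, \eqref{eq2} and Claims \ref{claim1}, \ref{claim2} are in hand.
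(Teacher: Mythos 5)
Your proposal is correct and follows essentially the same route as the paper: the Ore-type rotation through the edge $ua$ together with the edge $a^-x_i$ rules out any neighbour of $x_i$ in $N_G(u)^{-}$, and then (\ref{eq1}), (\ref{eq2}) and the tight degree-sum bound force $N_G(x_i)=N_G(v)$. One small caution: the claimed ``symmetric'' inequality $d_G(v)\ge d_G(x_i)$ is not actually symmetric, since Claim \ref{claim1}(iii) applies only to end-vertices of the longest path, but this is harmless because your containment $N_G(x_i)\subseteq N_G(v)$ together with $d_G(x_i)\ge d_G(v)$ already yields the desired equality.
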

\begin{proof}
	By Claim \ref{claim2}, we have $N_G(x_i) \subseteq V(P)-\{u, v\}.$ \\
	If there exists some vertex $y \in N_G(x_i)\cap N_G(u)^{-}$ then $P':=P+x_iy+uy^{+}-yy^{+}$ is a path with $|P'|=|P|+1.$ This contradicts the maximality of $P.$ Hence $N_G(x_i)\cap N_G(u)^{-} =\emptyset.$ Therefore, we obtain 
	$$
	\deg_G(u) +\deg_G(x_i)= |N_G(u)| +|N_G(x_i)| = |N_G(u)^-| +|N_G(x_i)| \leq |P|-1. 
	$$
	Combining with Claim \ref{claim2}, we have
	$$
	|G|-k\leq \sigma_2{(G)}\leq \deg_G(u) +\deg_G(x_i) \leq |P|-1 \Rightarrow |G| \leq |P|+k-1. 
	$$
	On the other hand, by (\ref{eq1}) we have $|G|=|P|+k-1.$ Then the equalities happen. Hence $|N_G(u)^-| +|N_G(x_i)| = |P|-1. $ Therefore we conclude that $N_G(u)^-\cup N_G(x_i) = V(P)-\{v\}.$ By combining with (\ref{eq2}), Claim \ref{claim1} (iii) and $N_G(x_i)\cap N_G(u)^{-} =\emptyset,$ we obtain $N_G(x_i)=N_G(v).$ This completes the proof of Claim \ref{claim3}.
\end{proof}
\begin{claim}\label{claim4}
	For every two distinct vertices $y, z \in N_G(v)$, then $yz \not\in E(P).$
\end{claim}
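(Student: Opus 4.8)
The plan is to argue by contradiction: suppose $y,z \in N_G(v)$ are distinct with $yz \in E(P)$, so that (after relabelling) $z = y^+$ along the oriented path $P[u,v]$. First I would build a longer path or a path of the same length with a leaf structure that contradicts the earlier-established facts. The natural move is to use the edges $vy$ and $vz$: since $v$ is an endpoint of $P$ and $y, y^+$ are consecutive interior vertices, the edge $vy$ together with the segment of $P$ from $y^+$ to $v$ forms a cycle $C$ on the vertex set $V(P[y^+,v]) \cup \{\text{nothing new}\}$; more usefully, I would try to rotate $P$ using $vy^+$ or $vy$ to relocate the endpoint and then reattach one of the pendant vertices $x_i$.

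Concretely, here is the mechanism I expect to work. By Claim \ref{claim3}, every $x_i$ ($1 \le i \le k-1$) satisfies $N_G(x_i) = N_G(v)$, so in particular $y, z \in N_G(x_i)$. Consider the path $P$ and delete the edge $yz = yy^+$; this breaks $P$ into two subpaths, $P_1 = P[u,y]$ and $P_2 = P[y^+,v]$. Now $x_1 \notin V(P)$ and $x_1$ is adjacent to both $y$ and $y^+ = z$. Hence $P_1 + y x_1 + x_1 y^+ + P_2$ (reading $P_2$ from $y^+$ to $v$) is a path from $u$ to $v$ through $x_1$, with vertex set $V(P) \cup \{x_1\}$, so its length is $|P| + 1$, contradicting the maximality of $P$. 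This is clean provided $y \ne u$ and $y^+ \ne v$; the edge-cases $y = u$ or $z = v$ are excluded because $u, v \notin N_G(v)$ (indeed $uv \notin E(G)$ by Claim \ref{claim1}(ii), and $v \notin N_G(v)$ since $G$ is simple), so both $y$ and $z$ are interior vertices of $P$ and the construction is legitimate.

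The main obstacle, and the only point needing care, is making sure the reattachment genuinely produces a \emph{path} (not a tree or a walk with a repeated vertex) and that it is strictly longer than $P$: one must check that $x_1$ is genuinely off $P$ (true, since $x_1 \in V(G) - V(P)$), that $yz$ is an edge of $P$ rather than a chord (this is exactly the hypothesis $yz \in E(P)$), and that inserting $x_1$ between $y$ and $y^+$ uses each of $y, x_1, y^+$ exactly once. Once this is verified the contradiction with the maximality of $P$ is immediate, and the claim follows.
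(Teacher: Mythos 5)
Your proposal is correct and is essentially the paper's own argument: both use Claim \ref{claim3} to get $x_1$ adjacent to $y$ and $z$, delete the path edge $yz$, and insert $x_1$ to obtain the longer path $P':=P+yx_1+zx_1-yz$, contradicting the maximality of $P$. The extra edge-case discussion ($y\neq u$, $z\neq v$) is harmless but not needed, since inserting $x_1$ between two consecutive vertices of $P$ yields a longer $u$--$v$ path in any case.
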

\begin{proof}
	Suppose to the contrary that there exist two distinct vertices $y, z \in N_G(v)$ such that $yz \in E(P).$ By Claim \ref{claim3} we have $yx_1 \in E(G)$ and $zx_1\in E(G).$ We consider the path $P':=P+yx_1+zx_1-yz.$ Then $|P'| > |P|,$ this is a contradiction with the maximality of $P.$ Therefore Claim \ref{claim4} is proved.
\end{proof}
Set $A=N_G(v)$ and $m=|N_G(v)|=|A|.$\\
By Claim \ref{claim2} and Claim \ref{claim3} we have
\begin{equation} \label{eq3}
\deg_G(x_1) +\deg_G(x_k)= m+m = 2m \geq |G|-k = |P|-1 \Rightarrow 2m\geq|P|-1.
\end{equation}
On the other hand, by Claim \ref{claim4} and $N_G(v) \subseteq V(P)-\{u, v\}$ we obtain $|N_G(v)| \leq \dfrac{|P|-1}{2}.$ Hence,
 \begin{equation} \label{eq4}
 2m \leq |P|-1.
 \end{equation}
 Using (\ref{eq3}) and (\ref{eq4}) we conclude $2m = |P|-1.$ We thus obtain
  \begin{equation} \label{eq5}
 |G|=2m+k.
 \end{equation}
 Moreover, we also obtain $|N_G(u)^{-}|=|P|-1-m=m.$ 
\begin{claim}\label{claim5}
	For every two distinct vertices $x, y \in N_G(u)^{-}$, then $xy \not\in E(G).$ In particular, $N_G(u)^{-}$ is an independent set in $G.$
\end{claim}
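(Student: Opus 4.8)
The plan is to first show that $A=N_G(v)$ sits in a completely rigid set of positions along $P$, and then to finish with a rotation argument. Write $P=p_0p_1\cdots p_{2m}$ with $p_0=u$ and $p_{2m}=v$, which is legitimate since $2m=|P|-1$. As $uv\notin E(G)$ and $N_G(v)\subseteq V(P)$ by Claim~\ref{claim1}~(i), we have $A\subseteq\{p_1,\dots,p_{2m-1}\}$, and by Claim~\ref{claim4} no two vertices of $A$ are consecutive on $P$. Since $|A|=m$ and a path on $2m-1$ vertices contains exactly one independent set of size $m$ (if $m$ elements of $\{1,\dots,2m-1\}$ have consecutive gaps at least $2$, their span is at least $2m-2$, forcing them to be $\{1,3,\dots,2m-1\}$), we conclude $A=\{p_1,p_3,\dots,p_{2m-1}\}$. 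Combining this with (\ref{eq2}), Claim~\ref{claim1}~(iii) and $|N_G(u)^-|=m$ forces $N_G(u)^-=V(P)\setminus(\{v\}\cup A)=\{p_0,p_2,\dots,p_{2m-2}\}$; in particular $N_G(u)=(N_G(u)^-)^+=\{p_1,p_3,\dots,p_{2m-1}\}=A$, so $u$ has no even-indexed neighbour on $P$.

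Now suppose, for contradiction, that two vertices of $N_G(u)^-=\{p_0,p_2,\dots,p_{2m-2}\}$ are adjacent. Since $u=p_0$ has no even-indexed neighbour, such an edge must be $p_{2i}p_{2j}\in E(G)$ with $1\le i<j\le m-1$, and then $p_{2i+1},p_{2j+1}\in N_G(u)$. Consider the walk
\[
P'\colon\quad p_{2i+1}\,p_{2i+2}\cdots p_{2j}\,p_{2i}\,p_{2i-1}\cdots p_1\,p_0\,p_{2j+1}\,p_{2j+2}\cdots p_{2m},
\]
in which the edge $p_{2j}p_{2i}$ and the edge $p_0p_{2j+1}=up_{2j+1}$ are used in place of the $P$-edges $p_{2i}p_{2i+1}$ and $p_{2j}p_{2j+1}$. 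The three index-blocks $\{2i+1,\dots,2j\}$, $\{0,\dots,2i\}$ and $\{2j+1,\dots,2m\}$ partition $\{0,\dots,2m\}$, so $P'$ is a path through all of $V(P)$, hence a longest path of $G$, with end-vertices $p_{2i+1}$ and $v=p_{2m}$. But $p_{2i+1}\in A=N_G(v)$, so $P'+vp_{2i+1}$ is a cycle on $V(P)$, contradicting Claim~\ref{claim1}~(ii). Therefore $N_G(u)^-$ is independent, which is Claim~\ref{claim5}.

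The crux, and the only real obstacle, is the first paragraph: one has to notice that Claim~\ref{claim4} together with the exact equality $2m=|P|-1$ leaves no freedom for $A$, so that $A$ is forced to be \emph{every other} vertex of $P$. This rigidity is exactly what guarantees that the rotation in the second paragraph leaves an end-vertex (namely $p_{2i+1}$) inside $N_G(v)$, so that the rerouted path closes into a spanning cycle; a rotation performed without knowing the position of $A$ would not close up. The remaining verifications (that $P'$ is genuinely a spanning path, and the degenerate cases $m\le 2$, where the statement is vacuous) are routine.
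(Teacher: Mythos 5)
Your proof is correct and takes essentially the same route as the paper: you first pin down $N_G(v)$ and $N_G(u)^-$ as the odd- and even-indexed vertices of $P$ (the paper's (\ref{eq6})), and then convert a hypothetical edge inside $N_G(u)^-$ into a cycle with vertex set $V(P)$, contradicting Claim \ref{claim1}(ii). The only difference is cosmetic: your spanning cycle uses the chords $up_{2j+1}$ and $vp_{2i+1}$, whereas the paper's uses $uz_{2i+1}$ and $vz_{2j-1}$.
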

\begin{proof}
Set $V(P[u,v])=\{u=z_0,z_1,..., z_{2m-1}, z_{2m}=v\}$ such that $z_i=z_{i+1}^{-}$ for every $0\leq i \leq 2m-1.$ By combining with (\ref{eq2}), Claim \ref{claim1} (iii) and Claim \ref{claim4}, we have $N_G(v)=\{z_{2j+1}\ | \ 0\leq j \leq m-1\}$ and $N_G(u)^{-}=\{z_{2j}\ |\ 0\leq j \leq m-1\}.$ In particular, we obtain
\begin{equation} \label{eq6}
N_G(u)=\{z_{2j+1}\ |\ 0\leq j \leq m-1\}=N_G(v).
\end{equation} 
Now, suppose the assertion of the claim is false. By (\ref{eq6}), there exist $1\leq i < j \leq m-1$ such that $z_{2i}z_{2j}  \in E(G).$ Hence we consider the cycle $C:=P+uz_{2i+1}+z_{2i}z_{2j}+z_{2j-1}v-z_{2i}z_{2i+1}-x_{2j}x_{2j-1}.$ Then the cycle $C$ has $|C| = |P|,$ this is a contradiction with Claim \ref{claim1}(ii). Claim \ref{claim5} is proved.
\end{proof}
\begin{claim}\label{claim6}
	For each vertex $y \in N_G(u)^{-}$, then $N_G(y)=N_G(v)(=A).$
\end{claim}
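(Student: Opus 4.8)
The plan is to split into two cases according to the explicit description of $N_G(u)^{-}$ obtained in the proof of Claim \ref{claim5}, namely $N_G(u)^{-}=\{z_0,z_2,\dots,z_{2m-2}\}$ with $z_0=u$, and to recall from there that $A=N_G(v)=\{z_1,z_3,\dots,z_{2m-1}\}$. If $y=u=z_0$, then (\ref{eq6}) already gives $N_G(u)=N_G(v)=A$, so there is nothing to prove. Thus it remains to treat $y=z_{2j}$ for some $1\leq j\leq m-1$.

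For such a $y$ the strategy is to squeeze $N_G(y)$ between an upper and a lower bound that both equal $|A|=m$. For the upper bound I would show $N_G(y)\subseteq A$ by ruling out every vertex of $G$ not lying in $A$: a neighbour of $y$ cannot be any $x_i$ with $1\leq i\leq k-1$, nor $v=x_k$, since $N_G(x_i)=N_G(v)=A$ by Claim \ref{claim3} while $y=z_{2j}\notin A$; and it cannot be $u$ or any other even-indexed vertex $z_{2i}$ of $P$, since $N_G(u)^{-}=\{z_0,z_2,\dots,z_{2m-2}\}$ is an independent set by Claim \ref{claim5}. As $V(G)=V(P)\cup\{x_1,\dots,x_{k-1}\}$ and $A$ consists exactly of the odd-indexed vertices of $P$, this leaves $N_G(y)\subseteq A$, hence $\deg_G(y)\leq m$. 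For the lower bound, observe that $y=z_{2j}\notin A=N_G(v)$ and $y\neq v$, so $y$ and $v$ are non-adjacent; applying $\sigma_2(G)\geq|G|-k=2m$ (by (\ref{eq5})) together with $\deg_G(v)=|A|=m$ yields $\deg_G(y)\geq m$. Combining the two bounds, $\deg_G(y)=m$ and therefore $N_G(y)=A$, as required.

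I do not expect a genuine obstacle here: the claim is essentially bookkeeping on top of the structure already extracted in Claims \ref{claim1}--\ref{claim5}. The one step that needs care is the upper bound $N_G(y)\subseteq A$, where one must be certain to have exhausted all vertices of $G$ lying outside $A$ — both the off-path vertices $x_1,\dots,x_{k-1}$ and every even-indexed vertex of $P$, including $u$ and $v$ — before concluding that the remaining candidates are exactly the vertices of $A$.
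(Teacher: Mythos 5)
Your proposal is correct and follows essentially the same route as the paper: establish $N_G(y)\subseteq A$ via Claims \ref{claim3} and \ref{claim5} together with the parity description of $P$ from the proof of Claim \ref{claim5}, then force $|N_G(y)|=m$ by the degree-sum condition. The only (immaterial) difference is that you apply $\sigma_2(G)\geq 2m$ to the non-adjacent pair $\{y,v\}$, whereas the paper uses the pair $\{y,u\}$.
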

\begin{proof}
	By (\ref{eq6}), it is true for the case $y=u.$ Now for each vertex $y \in N_G(u)^{-},(y\not=u).$ By Claim \ref{claim3} and (\ref{eq6}), we obtain $N_G(x_i) =N_G(v)$ for all $0\leq i \leq k.$ By combining with Claim \ref{claim5} we have $N_G(y) \subseteq N_G(v).$ Hence
	\begin{equation*}
	 2m=|G|-k \leq \sigma_2{(G)} \leq \deg_G(u) +\deg_G(y)\leq 2|N_G(v)|= 2m.
	\end{equation*}
	Then the equalities happen, in particular, we have $|N_G(y)|=m=|N_G(v)|.$ So we obtain $N_G(y)=N_G(v).$ Claim \ref{claim6} is proved.
\end{proof}
Now we set $B=\{x_i\}_{1\leq i \leq k}\cup (N_G(u)^{-}).$ Then $|B|=m+k.$ By Claim \ref{claim2}, Claim \ref{claim3} and Claim \ref{claim6} we obtain $B$ is an independent set in $G$ and $N_G(y)=A$ for every vertex $y\in B.$ These imply that $G$ is isomorphic to a graph $H$ with bipartition $V(H)=A\cup B.$

 Therefore we conclude that if $\sigma_2(G) \geq |G|-k$ and $G$ contains no spanning $k-$ended tree then $G$ is isomorphic to a graph $H.$ This completes the proof of Theorem \ref{thm-main}.

\end{document}